\newcommand\Cref[1]{{Corollary~\ref{#1}}}
\newcommand{\etype}[1]{\renewcommand{\labelenumi}{(#1{enumi})}}
\def\eroman{\etype{\roman}}
\newtheorem{thm}{Theorem}[section] 
\newtheorem{conj}[thm]{Conjecture}
\newtheorem{cor}[thm]{Corollary}
\newtheorem{corollary}[thm]{Corollary}
\newtheorem{defn}[thm]{Definition}
\newtheorem{example}[thm]{Example}
\newtheorem{lem}[thm]{Lemma}
\newtheorem{prop}[thm]{Proposition}
\newtheorem{proposition}[thm]{Proposition}
\newtheorem{rem}[thm]{Remark}
\newtheorem{rems}[thm]{Remarks}
\def\GKdim{\operatorname{GK dim}}
\def\Ann{\operatorname {Ann}}
\def\({\left(}
\def\){\right)}
\def\SL{\operatorname{SL}}
\def\gl{\operatorname{gl}}
\def\sli{\operatorname{sli}}
\long\def\forget#1\forgotten{}
\newcommand\comp[3][\bullet]{{{#1}_{{\if1#2{}\else{#2}\fi}{\if#3K{}\else{(#3)}\fi}}}} 
\begin{document}
\title[Hopfian and Bassian algebras] {Hopfian and Bassian algebras}

\author{Louis Rowen}

\address{Department of Mathematics, Bar-Ilan University, Ramat-Gan
52900,Israel} %
\email{rowen@math.biu.ac.il}

\author{Lance Small}
\address{Department of Mathematics, University of California at
San Diego,
La Jolla} %
\email{lwsmall@math.ucsd.edu}
  \thanks{This work was supported by the U.S.-Israel Binational Science
Foundation (grant no. 2010149)}.
\thanks{The authors would like to thank Donald Passman and Eliezer Hakatan for helpful
conversations involving Theorem~\ref{Rep11}.}
 \subjclass[2010]  {Primary:   16N60, 16R20  Secondary: 14R15,
16P90,     16S30, 16S50}

\date{\today}


\keywords{affine algebra, Bassian, domain, enveloping algebra, GK
dimension, Hopfian, PI-algebra, polynomial identity, representable,
residually finite dimensional, semiprime}

\begin{abstract} A ring $A$ is \textbf{Hopfian}  if  $A$  cannot be
isomorphic to a proper homomorphic image $A/J$. $A$ is
\textbf{Bassian}, if there cannot be an injection of $A$ into a
proper homomorphic image $A/J$. We consider classes of Hopfian
 and  Bassian rings, and tie representability of algebras and chain conditions on ideals to these
properties. In particular, any semiprime  algebra satisfying the ACC on semiprime
 ideals is
Hopfian, and any semiprime  affine
 PI-algebra  over a field   is Bassian. We also characterize the ACC on semiprime
 ideals.
\end{abstract}

\maketitle

\section{Introduction}
By ``affine'' we mean finitely generated as an algebra over a
Noetherian commutative base ring, often a field. In this paper we are
interested in structural properties of affine algebras, especially
those satisfying a polynomial identity (PI). Although there is an
extensive literature on affine PI-algebras, several properties
involving endomorphisms have not yet been fully investigated,
so that is the subject of this note. 
%
%
%

 The main objective in this paper is to study the Hopfian
property, that an algebra~$A$ cannot be isomorphic to  a proper
homomorphic image $A/J$ for any $0 \ne J \triangleleft A.$ Our
specific motivation is to gain information about group algebras and
enveloping algebras. We collect, unify, and extend some known facts
and verify the Hopfian property in one more case:
 \bigskip

 \noindent {\bf Theorem~\ref{Hopf}.} Every   weakly representable affine
 algebra  over a commutative Noetherian ring  is Hopfian.

It is easy to see that any semiprime Hopfian ring satisfies  the ACC
on semiprime ideals, leading us to characterize this condition. (One
direction is standard, but we do not have a reference for the other
direction.)

  \bigskip
   \noindent {\bf Proposition~\ref{ACCsemip}.} A ring $A$ satisfies  the ACC on semiprime ideals, iff $A$  satisfies  the ACC on prime ideals, with each   semiprime ideal being the intersection of
 finitely many prime ideals.

  \bigskip
 We then turn to an even stronger property, the \textbf{Bassian
property}, that there cannot be an injection of $A$ into a proper
homomorphic image of $A$.

 \medskip

  \noindent {\bf Theorem~\ref{Rep11}.}
 Any semiprime  affine
 PI-algebra $A$ over a field $F$ is Bassian.

  \bigskip

%
%
%
%
%
%

To compare algebras and their isomomorphisms, we need a nicely
behaved dimension with which to work, and for this purpose recall
the \textbf{Gelfand-Kirillov dimension} of an affine algebra $A =
F\{ a_1, \dots, a_\ell \}$, which is $ \GKdim (A): = \underset{n \to
\infty}\varlimsup\log _n d_n, $ where $A_n = \sum _n F a_{i_1}\cdots
a_{i_n}$ and $d_n = \dim _F A_n$.

The standard reference on Gelfand-Kirillov dimension is \cite{KrL},
from which we recall some  well-known facts:

\begin{itemize}
\item $\GKdim (A)$ is independent of the choice of generating set
$\{ a_1, \dots, a_\ell \}$ of $A$.

\item If $A \subset A'$ then $\GKdim (A) \le \GKdim (A').$

\item $\GKdim (A)  \ge \GKdim (A/\mathcal I)$ for every ideal
$\mathcal I$ of $A$.

\item  $\GKdim (A) > \GKdim (A/\mathcal I)$ for every ideal
$\mathcal I$ of $A$ containing a regular element of $A$.

\item  If $A$ is a finite subdirect product of algebras
$A_{\mathcal I_1}, \dots, A_{\mathcal I_t},$ then $$\GKdim (A) =
\max \{ \GKdim (A/\mathcal I_j): 1 \le j \le t \}.$$

%


\item  The  GK dimension   of any algebra finite over $A$ equals
$\GKdim (A).$
\end{itemize}

\section{Hopfian rings}
\begin{defn}

  A ring is
\textbf{residually finite} if it is a subdirect product of finite
rings.

  An algebra over a field is
\textbf{residually finite dimensional} if it is a subdirect product of finite
dimensional algebras.
 \end{defn}

\begin{rems}\label{Hopf rem}$ $
 \begin{enumerate} \eroman\item Any finite ring is Hopfian, since any onto
map must be 1:1 by the pigeonhole principle. \item \cite{O} Any
commutative affine algebra over a field is Hopfian.
\item \cite{OR} Any
commutative affine algebra over a commutative ring $C$ is Hopfian
with respect to $C$-homomorphisms.
\item \cite{AFS} There is an affine PI-algebra over a field, that is not Hopfian.
\item  \cite{Ne}  There is a group which is not
Hopfian in the group-theoretic sense, so its group algebra is not
Hopfian.
\item  \cite{Ab} Furthermore, there is a  finitely presented solvable group which is not Hopfian  in the
group-theoretic sense.
 \item
Malt'sev proved that the free algebra is Hopfian, a fact used in
\cite{DL} in conjunction with the Jacobian conjecture. Short proofs
are given in \cite{C}, \cite{OR}, and \cite{BelRY}, the latter using
GK dimension.
\item \cite[Theorem~3]{Le} Every residually finite ring is Hopfian,
proved by Lewin via the fact that any subring of finite index in a
finitely generated ring is also finitely generated.
\item  Malt'sev proved that  residually finite dimensional    (called {\it
 approximable} in~\cite{BLK,BLK2}) affine algebras over a field are all Hopfian.
 \item  Positively graded affine algebras are residually finite
 dimensional, and thus Hopfian by (x).
\item Malt'sev also proved that every  representable affine algebra
over a field is residually finite dimensional and thus Hopfian,
cf.~\cite[Theorems 5 and~6]{BLK}.
\item The enveloping algebra of the positive part of the Witt
algebra is  positively graded and thus Hopfian. But it is not known
whether the enveloping algebra of the Witt algebra itself is Hopfian.

\item On the other hand, Irving \cite{I} produced a graded affine PI-algebra over a field,
 that is not representable but is positively graded and thus Hopfian.
 \item \cite{Mi1,Mi2} If the enveloping algebra of a Lie algebra $L$ is residually finite dimensional,
 then $L$ is residually finite dimensional as a Lie algebra.
\end{enumerate}
\end{rems}

For a given property designated P, we say  that an algebra $A$
satisfies ACC(P) if  any   ascending chain  of ideals $J_1 \subseteq
J_2 \subseteq \dots $ for which each $A/J_i$ has property P, must
stabilize. The following obvious observation is crucial.

\begin{lem}\label{makeswork} For any given property P, if every proper homomorphic image of $A$ satisfies ACC(P), then $A$
is Hopfian.\end{lem}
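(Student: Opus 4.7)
The plan is a standard diagonal-type argument: assuming $A$ is not Hopfian, I would build an infinite strictly ascending chain of ideals inside a proper homomorphic image of $A$ whose successive factor rings are all isomorphic to $A$, and hence all share property P once $A$ does; this would then contradict ACC(P) in that proper quotient.

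First I would suppose for contradiction that $\varphi \colon A \to A/J$ is an isomorphism for some $0 \ne J \triangleleft A$. Composing the canonical projection $\pi \colon A \to A/J$ with $\varphi^{-1}$ produces a surjective ring endomorphism $\psi \colon A \to A$ with $\ker \psi = J$. Each iterate $\psi^{n}$ is again a surjective endomorphism of $A$, so setting $J_{n} := \ker(\psi^{n})$ one has $A/J_{n} \cong A$ for every $n \ge 1$, and in particular $J_{1} = J$.

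Next I would check that the chain $J_{1} \subsetneq J_{2} \subsetneq \cdots$ is strict. The inclusion $J_{n} \subseteq J_{n+1}$ is immediate. For strictness I would exploit the surjectivity of $\psi^{n}$: for any nonzero $b \in J_{1}$, choose $a \in A$ with $\psi^{n}(a) = b$; then $\psi^{n+1}(a) = \psi(b) = 0$ while $\psi^{n}(a) = b \ne 0$, exhibiting an element of $J_{n+1} \setminus J_{n}$. This is the only genuinely mechanical step, and I expect it to be the main (but minor) technical obstacle.

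Finally I would pass to the proper homomorphic image $B := A/J_{1}$, which by hypothesis satisfies ACC(P). Inside $B$ the ideals $J_{n}/J_{1}$ form a strictly ascending chain, and each factor $B/(J_{n}/J_{1}) \cong A/J_{n} \cong A$ has property P (chosen so that $A$ itself carries P, which is how the lemma is used in the sequel). This contradicts ACC(P) for $B$, so no such isomorphism $\varphi$ can exist and $A$ is Hopfian.
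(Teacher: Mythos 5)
Your proposal is correct and is essentially the paper's own argument, just written out more explicitly: the paper also builds the strictly ascending chain $J_1 \subset J_2 \subset \cdots$ with each $A/J_n \cong A$ and derives the same contradiction with ACC(P) in the proper image $A/J_1$, while you realize the $J_n$ concretely as $\ker(\psi^n)$ and verify strictness via surjectivity. Your parenthetical observation that one needs $A$ itself to have property P (as in the module version, Lemma~\ref{makeswork1}) is a fair and accurate reading of how the lemma is applied.
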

\begin{proof} Otherwise take $J_1$ such that $\varphi: A \cong A/J_1$, and
$J_2 \supset J_1$ such that $A \cong A/J_1 \cong  (A/J_1)/ (J_2/J_1)
\cong A/J_2$ and so forth, but $J_1 \subset J_2 \subset \dots,$ a
contradiction.
\end{proof}

 In
particular, we have the following instant consequences:

\begin{cor} Any algebra satisfying the ACC on two-sided ideals is
Hopfian.  \end{cor}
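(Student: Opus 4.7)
The plan is to derive this as an immediate application of Lemma~\ref{makeswork}. Take the property $P$ to be trivially satisfied by every algebra, so that ACC(P) reduces to the usual ascending chain condition on two-sided ideals.

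First I would record the standard correspondence-theoretic observation that ACC on two-sided ideals is inherited by every homomorphic image: the two-sided ideals of $A/J$ are in inclusion-preserving bijection with the two-sided ideals of $A$ containing $J$, so an ascending chain in $A/J$ lifts to an ascending chain in $A$, which stabilizes by hypothesis. Consequently every homomorphic image of $A$ (proper or not) satisfies ACC(P).

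Then Lemma~\ref{makeswork}, applied with this trivial $P$, gives that $A$ is Hopfian. There is essentially no obstacle; the only point one might wish to double-check is the correspondence theorem for two-sided ideals, but this is standard. One could equivalently give a direct argument: if $\varphi\co A \to A/J_1$ were an isomorphism with $J_1 \ne 0$, then setting $J_{i+1} = \varphi^{-1}(J_i/J_1)$ (under the identification $A \cong A/J_1$) produces a strictly ascending chain of two-sided ideals, contradicting ACC. But invoking the lemma is cleaner and sets the pattern for the analogous corollaries that presumably follow in the paper (e.g.\ replacing ``all ideals'' by ``semiprime ideals'' to handle the semiprime case advertised in the abstract).
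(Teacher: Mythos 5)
Your proposal is correct and matches the paper's intent exactly: the corollary is stated as an ``instant consequence'' of Lemma~\ref{makeswork}, obtained by taking $P$ to be the trivial property so that ACC($P$) is the ACC on two-sided ideals, which passes to homomorphic images by the correspondence theorem. Nothing further is needed.
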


\begin{cor}\label{GKcomp11} Any prime ring $A$ satisfying ACC(prime ideals) is Hopfian.\end{cor}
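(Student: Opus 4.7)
The plan is to apply \Rref{makeswork} with the property $\mathrm{P}$ being ``is a prime ring.'' Under this choice, $\mathrm{ACC}(\mathrm{P})$ for an algebra $B$ says exactly that any ascending chain of ideals $K_1 \subseteq K_2 \subseteq \cdots$ for which each $B/K_i$ is prime must stabilize, i.e., $B$ satisfies ACC on prime ideals.

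First I would check that every proper homomorphic image $A/I$ of $A$ inherits the ACC on prime ideals. By the standard correspondence, prime ideals of $A/I$ are in order-preserving bijection with prime ideals of $A$ that contain $I$; hence any ascending chain of primes in $A/I$ lifts to an ascending chain of primes in $A$, which stabilizes by hypothesis. Thus $A/I$ satisfies $\mathrm{ACC}(\mathrm{P})$ for every proper ideal $I$ of $A$.

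Now I would invoke \Rref{makeswork}. Suppose for contradiction that $A \cong A/J_1$ properly; iterating as in the proof of \Rref{makeswork}, one produces a strictly ascending chain of ideals $J_1 \subset J_2 \subset \cdots$ with $A/J_i \cong A$ for every $i$. The hypothesis that $A$ is \emph{prime} is precisely what is needed at this step: each quotient $A/J_i$ is then prime, so each $J_i$ is a prime ideal of $A$. Passing to the quotient $A/J_1$ (a proper homomorphic image of $A$, which satisfies $\mathrm{ACC}(\mathrm{P})$ by the previous paragraph), the chain $J_2/J_1 \subset J_3/J_1 \subset \cdots$ is a non-stabilizing ascending chain of prime ideals of $A/J_1$, the desired contradiction.

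There is no real obstacle here, since the statement is a direct specialization of \Rref{makeswork}: the only point worth isolating is the observation that primeness of $A$ forces the chain produced by the lemma's proof to consist of prime ideals, so the ACC hypothesis can be brought to bear.
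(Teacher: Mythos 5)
Your proof is correct and follows exactly the route the paper intends: Corollary~\ref{GKcomp11} is stated as an instant consequence of Lemma~\ref{makeswork}, and your argument (primeness of $A$ forces every $J_i$ in the chain to be a prime ideal, and quotients inherit the ACC on prime ideals via the correspondence theorem) is the same specialization the paper carries out explicitly for the semiprime case in Corollary~\ref{ACCpr}. The only cosmetic difference is that you take the property P to be ``is a prime ring'' rather than the paper's ``$A/J\cong A$,'' which changes nothing of substance.
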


We say that an ideal $P$ is  \textbf{completely prime} if $A/P$ is a
domain.

\begin{cor}\label{GKcomp12} Any domain $A$ satisfying ACC(completely prime ideals)  is Hopfian.\end{cor}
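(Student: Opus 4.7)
The plan is to apply Lemma~\ref{makeswork} directly, with the property P taken to be ``$A/J$ is a domain,'' equivalently ``$J$ is a completely prime ideal.'' Under this choice, ACC(P) on a ring $B$ unfolds precisely to ACC on the completely prime ideals of $B$.

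The one thing to verify is that ACC on completely prime ideals is inherited by every proper homomorphic image $A/J$. This is routine: by the lattice correspondence theorem, ideals of $A/J$ correspond bijectively to ideals of $A$ containing $J$, and under this bijection completely prime ideals go to completely prime ideals, since $(A/J)/(K/J) \isom A/K$. Hence any ascending chain of completely prime ideals of $A/J$ lifts to an ascending chain of completely prime ideals of $A$, which stabilizes by hypothesis. Thus every proper homomorphic image of $A$ satisfies ACC(P), and Lemma~\ref{makeswork} yields that $A$ is Hopfian.

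Equivalently, one could bypass the lemma and unwind its proof in this setting: if $A \isom A/J_1$ for some nonzero ideal $J_1$, then iterating the isomorphism produces a strictly ascending chain $J_1 \subsetneq J_2 \subsetneq \cdots$ with $A/J_i \isom A$ for all $i$; because $A$ is a domain, each $J_i$ is completely prime, contradicting ACC(completely prime ideals). There is no genuine obstacle here: the corollary is merely the ``completely prime'' analogue of \Cref{GKcomp11}, and the only conceptual point is recognizing that the property P in Lemma~\ref{makeswork} is a property of the \emph{quotient} rather than of the ideal itself, so that ``$A$ is a domain'' is exactly what lets ACC on completely prime ideals produce the needed contradiction.
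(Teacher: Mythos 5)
Your proposal is correct and matches the paper's proof, which likewise observes that any ideal $J$ with $A\cong A/J$ is completely prime (the quotient being a domain) and then invokes the argument of Lemma~\ref{makeswork}. Your extra verification that ACC on completely prime ideals passes to homomorphic images is a harmless elaboration of the same idea.
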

\begin{proof} Otherwise taking $\varphi: A \cong A/J \subseteq A, $ we see that $J$ is a completely prime ideal, so we use the
previous argument of Lemma~\ref{makeswork}.\end{proof}

The reason we became interested in Hopfian rings is in the converse,
that any non-Hopfian domain cannot satisfy ACC(completely prime
ideals), so if certain enveloping algebras are non-Hopfian, they
cannot satisfy ACC (ideals), by~Remark~\ref{Hopf rem}. But the
opposite is the case.

\begin{rem}\label{ansIS}  Iyudu and Sierra \cite[Theorem 1.2,
Proposition 6.4]{IS} have verified the ACC on completely prime
ideals for the most important of these enveloping algebras the
positive Witt algebra $U(W+),$the
  Witt algebra  $U(W),$ and the Virasoro algebra $U(Vir)$. Their method is to show that
  the first two
  algebras and central factors of $U(Vir)$
have just infinite GK-dimension; this instantly implies the
ACC(completely prime ideals) by an easy argument (since modding out
by the first completely prime ideal in an ascending chain would
yield a domain of finite GK dimension). Their argument for $U(Vir)$
is somewhat more intricate. One concludes that these algebras are
all Hopfian, by Corollary~\ref{GKcomp12}.
 \end{rem}

\begin{rem} Along the same lines, Passman and Small \cite[Lemma 1.1]{PS} proved
that for any epimorphism of finitely presented algebras, the kernel
is a finitely generated two-sided ideal. Hence if a finitely
presented algebra  satisfies the ACC on finitely generated two-sided
ideals, it must be Hopfian.\end{rem}

We turn to semiprime rings.
Suppose $J\triangleleft A.$ $\sqrt {J}$, called  the \textbf{prime radical} of the ideal $J$, denotes the intersection of the prime
 ideals containing ${J}$. The ideal ${J}$ is \textbf{semiprime} if $\sqrt {J}= J.$ A ring $A$ is \textbf{semiprime} if $\sqrt {0}= 0$.

\begin{cor}\label{ACCpr} Any semiprime ring $A$ satisfying the ACC on semiprime
 ideals is
Hopfian.   \end{cor}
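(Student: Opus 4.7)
The plan is to invoke \Lemma\ref{makeswork} (our Lemma~\ref{makeswork}) with the property $P = \emph{semiprime}$, after verifying the hypothesis that every proper homomorphic image $A/J$ of $A$ satisfies ACC on semiprime ideals.

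The key observation is the well-known lattice correspondence for ideals under the canonical surjection $\pi:A \to A/J$: an ideal $\bar K \triangleleft A/J$ is semiprime if and only if its preimage $\pi^{-1}(\bar K) \triangleleft A$ is semiprime (both conditions say the corresponding quotient has zero prime radical, and those quotients coincide). Hence any strictly ascending chain of semiprime ideals in $A/J$ lifts to a strictly ascending chain of semiprime ideals in $A$ (all containing $J$), which must stabilize by hypothesis. Thus $A/J$ satisfies ACC on semiprime ideals.

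With the lemma's hypothesis verified, the corollary follows: if $A$ were not Hopfian, one would choose $J_1 \triangleleft A$ with $J_1 \neq 0$ and $A \cong A/J_1$. Since $A$ is semiprime, so is $A/J_1$, making $J_1$ itself a semiprime ideal. The isomorphism transports the ACC-on-semiprime-ideals property to $A/J_1$, so the argument of \Lemma\ref{makeswork} iterates to yield $J_2 \supsetneq J_1$ with $A \cong A/J_2$ and $J_2$ semiprime, and so on. The resulting strictly ascending chain $J_1 \subsetneq J_2 \subsetneq \cdots$ of semiprime ideals of $A$ contradicts the ACC hypothesis.

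There is no real obstacle here; the only point requiring a line of justification is that a semiprime ideal in $A/J$ pulls back to a semiprime ideal in $A$, which is immediate from the correspondence theorem applied to prime (hence to semiprime) ideals. Everything else is a direct application of \Lemma\ref{makeswork}.
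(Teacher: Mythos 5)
Your proof is correct and follows essentially the same route as the paper: apply Lemma~\ref{makeswork} after observing that the kernels arising from a failure of the Hopfian property are semiprime ideals (since the quotients are isomorphic to the semiprime ring $A$), so the resulting strictly ascending chain violates the ACC on semiprime ideals. The paper's own proof is just a terser version of this, leaving implicit the correspondence-theorem verification that quotients inherit the ACC, which you spell out.
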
   \begin{proof} Non-Hopfian now says $J$ is a
nonzero ideal such that $A \cong A/J$. Then clearly  the ideal~$J$
is semiprime, so Lemma~\ref{makeswork} is applicable.
\end{proof}

This applies in particular to affine PI-algebras (although this
result is strengthened in Theorem~\ref{Rep11}), and many enveloping
algebras of Lie algebras. Note that Remark~\ref{Hopf rem} gives a
counterexample for non-semiprime rings.

Corollary~\ref{ACCpr} motivates us to study the  ACC on semiprime
 ideals. There is a natural criterion in terms of the prime spectrum.

%

We say that a decomposition $J = P_1 \cap \dots P_t$ as an
intersection of prime ideals is \textbf{irredundant} if $P_i \not
\subseteq P_j$ for each $i,j$. The following observation is from
Herstein \cite{H}.

  \begin{lem}\label{irredu} Any irredundant decomposition $J = P_1 \cap \dots P_t$ of $J$ is unique (up to permutation), and thus consists of minimal prime ideals. \end{lem}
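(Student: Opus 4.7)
The plan is to use the standard fact that if a prime ideal $P$ contains a finite intersection of ideals, then $P$ contains one of the factors (for if $P \supseteq I_1 \cap \dots \cap I_m$ with no $I_k \subseteq P$, pick $a_k \in I_k \setminus P$; then $a_1 \cdots a_m \in I_1 \cap \dots \cap I_m \subseteq P$, contradicting primeness of $P$). Apply this twice to compare two irredundant prime decompositions of $J$ with each other, and once more to compare with an arbitrary prime containing $J$.

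First I would tackle uniqueness. Suppose $J = P_1 \cap \dots \cap P_t = Q_1 \cap \dots \cap Q_s$ are two irredundant decompositions into primes. Fix $i$. Since $Q_1 \cap \dots \cap Q_s = J \subseteq P_i$ and $P_i$ is prime, some $Q_j \subseteq P_i$. But $P_1 \cap \dots \cap P_t = J \subseteq Q_j$, and $Q_j$ is prime, so some $P_k \subseteq Q_j \subseteq P_i$. Irredundancy of the first decomposition forces $P_k = P_i$, whence $Q_j = P_i$. This shows every $P_i$ appears among the $Q_j$'s; by symmetry the two lists coincide up to permutation.

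Next, for the minimality assertion, let $P$ be any prime ideal of $A$ containing $J$; I will show some $P_i \subseteq P$. Indeed $P_1 \cap \dots \cap P_t = J \subseteq P$, so primeness of $P$ yields $P_i \subseteq P$ for some $i$. Taking $P$ itself to be one of the $P_k$'s shows that each $P_k$ contains some $P_i$ with $i \neq k$ only if irredundancy is violated, hence no $P_k$ properly contains another prime $P'$ with $J \subseteq P'$; otherwise applying the above to $P = P'$ would give $P_i \subseteq P' \subsetneq P_k$, and since $P_i$ is then a prime containing $J$ strictly below $P_k$, one could not also have $P_k \subseteq P_i$. Thus each $P_k$ is minimal among primes containing $J$.

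There is no real obstacle here; the only subtlety is being careful in the minimality step to separate the internal comparison (between the $P_i$'s) from comparison with an outside prime, and to invoke irredundancy exactly where it is needed. The argument is entirely formal and does not use any hypothesis on $A$ beyond ordinary ring-theoretic primeness.
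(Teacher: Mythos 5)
Your uniqueness argument is the same as the paper's: a prime containing $Q_1\cap\dots\cap Q_s$ contains some $Q_j$, and irredundancy then pins down the bijection. The one step that would fail as written is your parenthetical justification of that key fact. In a noncommutative ring (and this paper is about noncommutative PI-algebras), having $a_1\cdots a_m\in P$ with no $a_k\in P$ is \emph{not} a contradiction to primeness: primeness of $P$ means $aRb\subseteq P$ forces $a\in P$ or $b\in P$, and for instance in the prime ring $M_2(F)$ one has $e_{12}e_{12}=0$ with $e_{12}\neq 0$. The correct justification, and the one the paper uses, goes through products of ideals: $I_1I_2\cdots I_m\subseteq I_1\cap\dots\cap I_m\subseteq P$, and for two-sided ideals $IJ\subseteq P$ does force $I\subseteq P$ or $J\subseteq P$. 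With that repair the uniqueness part is exactly the paper's proof.

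For the minimality assertion you proceed differently from the paper: you show directly that every prime $P\supseteq J$ contains some $P_i$, so a prime over $J$ sitting strictly inside some $P_k$ would violate irredundancy. The paper instead replaces each $P_i$ by a minimal prime over $J$ contained in it and appeals to the uniqueness just proved. Your route is slightly stronger (it identifies the $P_i$ as \emph{all} the minimal primes over $J$) and does not depend on the first part; your prose in that step is convoluted, but the underlying argument is sound.
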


  \begin{prop}\label{ACCsemip} A ring $A$ satisfies  the ACC on semiprime ideals, iff $A$  satisfies  the ACC on prime ideals, with each   semiprime ideal being the intersection of
 finitely many prime ideals.  \end{prop}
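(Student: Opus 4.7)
The plan is to treat the two directions separately, with the forward direction being routine and the reverse requiring a delicate iterated use of K\"onig's lemma.

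For the forward direction, assume ACC on semiprime ideals; ACC on primes is immediate. To show every semiprime ideal is a finite intersection of primes, suppose the contrary and use the ACC hypothesis to pick a maximal counterexample $J^*$. Such $J^*$ cannot itself be prime, so pick $a,b \notin J^*$ with $aAb \subseteq J^*$, whence $(J^* + AaA)(J^* + AbA) \subseteq J^*$. Setting $I_1 = \sqrt{J^* + AaA}$ and $I_2 = \sqrt{J^* + AbA}$, each is a semiprime ideal strictly larger than $J^*$, and hence a finite intersection of primes by the maximality of $J^*$. The product inclusion plus primeness forces every prime containing $J^*$ to contain $I_1$ or $I_2$, so $J^* = I_1 \cap I_2$ is itself a finite intersection of primes, a contradiction.

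For the reverse direction, I argue by contradiction from a supposed strict chain $J_1 \subsetneq J_2 \subsetneq \cdots$ of semiprime ideals. Write $J_k = \bigcap S_k$ irredundantly, with $S_k$ the (finite, by hypothesis) set of minimal primes over $J_k$. For each $P \in S_{k+1}$, since $P \supseteq J_k = \bigcap S_k$, primeness picks out some $f_k(P) \in S_k$ with $f_k(P) \subseteq P$, giving a rooted tree with finite levels. If $\bigcup_k S_k$ were finite, only finitely many subsets would be possible for $S_k$, forcing the chain to stabilize, so $\bigcup_k S_k$ is infinite. K\"onig's lemma then yields an infinite ascending branch $P_1 \subseteq P_2 \subseteq \cdots$, which stabilizes by ACC on primes to some $P_\infty \in S_k$ for all $k \geq N$. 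A short check shows $P_\infty$ is minimal over $J_\infty := \bigcup_k J_k$: any smaller prime containing $J_\infty$ would contain $J_N$, contradicting minimality of $P_\infty$ over $J_N$. Applying the hypothesis to $\sqrt{J_\infty}$ produces a finite set $\{Q_1,\dots,Q_r\}$ of minimal primes over $J_\infty$, so the collection $\mathcal B$ of \emph{persistent} primes (those lying in $S_k$ for every large $k$) is finite.

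Choose $K_0$ so that $\mathcal B \subseteq S_k$ for all $k \geq K_0$, and set $\mathcal T_k := S_k \setminus \mathcal B$. The main technical step is to check that $f_k$ restricts to a map $\mathcal T_{k+1} \to \mathcal T_k$: if $f_k(P) = Q_i \in \mathcal B$, then since $\mathcal B \subseteq S_{k+1}$ and $S_{k+1}$ is an antichain by Lemma~\ref{irredu}, the inclusion $P \supseteq Q_i$ forces $P = Q_i \in \mathcal B$, contradicting $P \in \mathcal T_{k+1}$. Repeating the K\"onig-plus-ACC argument on the subtree $(\mathcal T_k)$ then shows that an infinite $\bigcup_k \mathcal T_k$ would yield a persistent prime living in $\mathcal T_k$ for all large $k$ — impossible, since every persistent prime lies in $\mathcal B$. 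Hence $\bigcup_k \mathcal T_k$ is finite, making $\bigcup_k S_k$ finite after all, contradicting the dichotomy. The main obstacle is this two-tier K\"onig argument, whose success hinges on the antichain property from Lemma~\ref{irredu} together with identifying persistent primes as minimal primes over $J_\infty$.
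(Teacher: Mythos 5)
Your proof is correct, but the backward direction takes a genuinely different route from the paper's. For $(\Rightarrow)$ you and the paper do essentially the same thing: the paper simply cites the standard Noetherian-type induction (Procesi), and you spell it out --- a maximal non-decomposable semiprime $J^*$ cannot be prime, and $J^* = \sqrt{J^*+AaA}\cap\sqrt{J^*+AbA}$ exhibits it as the intersection of two strictly larger, hence finitely decomposable, semiprime ideals. For $(\Leftarrow)$ the paper inducts on a lexicographically ordered ``depth vector'' of the chain, combined with a Noetherian-style passage to quotients $A/P$: from irredundant decompositions of $J_1$ and $J_2$ it extracts either a drop in depth or the start of an ascending chain of primes. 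You instead organize the minimal primes of all the $J_k$ into a finitely branching forest via the maps $f_k\colon S_{k+1}\to S_k$, apply K\"onig's lemma plus ACC(primes) to manufacture persistent primes, identify these as minimal primes over $J_\infty=\bigcup_k J_k$ (hence finitely many, by the decomposition hypothesis applied to $\sqrt{J_\infty}$), and then rerun the K\"onig argument on the complementary subforest $\mathcal T_k=S_k\setminus\mathcal B$ --- the restriction $f_k(\mathcal T_{k+1})\subseteq\mathcal T_k$ being exactly where the antichain property from Lemma~\ref{irredu} enters --- to conclude that $\bigcup_k S_k$ is finite and the chain stabilizes. Your version is longer but isolates cleanly where each hypothesis is used (ACC on primes to stabilize branches, finite decomposability of $\sqrt{J_\infty}$ to bound the persistent primes), and it sidesteps the paper's rather delicate induction on infinite lexicographic depth vectors; the cost is the two-tier K\"onig bookkeeping and a mild reliance on dependent choice, neither of which is a gap.
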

\begin{proof} In either direction, we may assume that $A$ is semiprime by modding out some ideal  in the chain.
$(\Rightarrow)$ is a standard Noetherian-type induction argument, cf.~\cite[Proposition~2.3]{Pr}.%

$(\Leftarrow)$ We define the \textbf{depth} of a semiprime ideal $J$ to be the  size of an irredundant decomposition, i.e., the minimal number of prime ideals whose intersection is $J$, and
for any infinite
 ascending chain  of semiprime ideals $0 = J_1 \subseteq
J_2 \subseteq \dots $, we induct on the  depth vector $\bold d =(d_1, d_2,\dots)$ (lexicographic order) where  $d_i$ is the depth of $J_i$.
Namely, we assume that there are no infinite chains of depth $< \bold d$, and want to prove that there are no infinite chains of depth $\bold d$.
(The assumption is vacuous for $d_1=1$)

We take a semiprime ideal $P$ of depth $\bold d$ maximal with respect to the assertion failing in $A/P$; passing to $A/P$ we may assume that the
assertion holds in every proper homomorphic image of $A.$ Write $J_1 = P_1 \cap \dots \cap P_t$ and $J_2 = Q_1 \cap \dots \cap Q_{t'}$
for prime ideals $P_j$ and $Q_j$
As in the proof of Lemma~\ref{irredu}, for each $Q_j$ we have some $P_{i(j)}\subseteq Q_j.$
Clearly $\cap _j P_{i(j)} \subseteq  \cap _j Q_j = J_2,$ so we could discard any $i$ not appearing as an $i_j$ and conclude by induction on depth.
Hence all the $i$ appear as $i(j)$ for suitable $j$, and we could build an infinite ascending sequence of prime ideals, contrary to assumption.
\end{proof}


The ACC on semiprime ideals also comes up in \cite[Proposition
14]{Lo}. But we can push these ideas further.
 Following Malt'sev and Amitsur~\cite{Am}, we  define   property
${\operatorname{Emb}_n}$  to be that $A/J^i$ can be embedded in
matrix rings $M_n(K_i)$ over commutative ring $K_i$, with $n$ bounded.

 Amitsur~\cite[Theorem~1]{Am} proved
that any affine   commutative Noetherian algebra satisfies
ACC($\operatorname{Emb}_n$).
An algebra $A$ is called \textbf{weakly representable} if $A$
can be embedded into $M_n(K)$ for some commutative ring $K$.
 Consequently, one has:

 \begin{thm}\label{Hopf} Every   weakly representable affine
 algebra  over a commutative Noetherian ring  is Hopfian.
 \end{thm}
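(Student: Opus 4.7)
The plan is to invoke Lemma~\ref{makeswork} with property $\operatorname{Emb}_n$, taking $n$ so that $A \hookrightarrow M_n(K)$ for some commutative ring $K$; such an $n$ exists since $A$ is weakly representable. The key point, fixed at the outset, is that this single $n$ will serve uniformly for every quotient considered below.

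To verify the hypothesis of Lemma~\ref{makeswork}, I would argue that every proper homomorphic image $A/J$ satisfies ACC$(\operatorname{Emb}_n)$. This is immediate from Amitsur's theorem \cite[Theorem~1]{Am}, quoted in the paragraph preceding the statement: $A/J$ remains affine over the commutative Noetherian base ring, so Amitsur's result applies to it and yields ACC$(\operatorname{Emb}_n)$.

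To close the argument, suppose $A$ were not Hopfian. The proof of Lemma~\ref{makeswork} produces an infinite strictly ascending chain $J_1 \subset J_2 \subset \cdots$ of ideals of $A$ with $A \cong A/J_i$ for every $i$. Composing the given embedding $A \hookrightarrow M_n(K)$ with each isomorphism $A/J_i \cong A$ shows that every $A/J_i$ embeds in $M_n(K)$ with the \emph{same} $n$, so each satisfies $\operatorname{Emb}_n$. The induced chain $J_2/J_1 \subset J_3/J_1 \subset \cdots$ in the proper homomorphic image $A/J_1$ then contradicts ACC$(\operatorname{Emb}_n)$ in $A/J_1$. Beyond quoting Amitsur's theorem there is essentially nothing to do; the one subtlety is the uniformity of $n$, which is exactly what the hypothesis ``weakly representable'' supplies — as opposed to a quotient-by-quotient bound, which would not suffice for Amitsur's ACC.
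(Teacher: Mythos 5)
Your proposal is correct and is essentially the paper's own proof, which reads in full ``Apply Lemma~\ref{makeswork} to Amitsur's theorem''; you have simply unpacked that one line, correctly identifying that the uniform bound $n$ supplied by weak representability is what lets the single property $\operatorname{Emb}_n$ apply to every quotient in the chain. No discrepancy to report.
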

  \begin{proof} Apply Lemma \ref{makeswork} to Amitsur's theorem.
\end{proof}

%
%
%
%
%
%

We can get information about the  kernel of an epimorphism from the
following observation.

\begin{lem}\label{J1}
$\varphi(J) \subseteq J$ for any epimorphism $\varphi$ of $A$, and thus
$\varphi$ induces an epimorphism $\overline{\varphi}$ of $A/J,$ under any of the following conditions:
\begin{itemize}
\item $J$ is the prime radical of $A$.

\item $J$ is the upper nilradical of $A$.

\item $J$ is the Jacobson radical of $A$.
 \end{itemize} \end{lem}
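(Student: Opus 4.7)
The plan is to exploit intrinsic characterizations of each radical that are preserved under surjective self-maps of $A$, using in a crucial way that $\varphi$ is surjective (not merely a homomorphism). In all three cases, once $\varphi(J)\subseteq J$ is established, the induced map $\overline\varphi\colon A/J\to A/J$ sending $a+J\mapsto \varphi(a)+J$ is well-defined, and it is surjective because $\varphi$ is.

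For the prime radical, I would write $J=\bigcap_P P$ as $P$ ranges over the prime ideals of $A$. For any such $P$, the preimage $\varphi^{-1}(P)$ is again prime, because $A/\varphi^{-1}(P)\cong \varphi(A)/P=A/P$ is a prime ring (this uses $\varphi(A)=A$). Hence for $x\in J$ we have $x\in \varphi^{-1}(P)$ for every prime $P$, so $\varphi(x)\in P$ for every prime $P$, and therefore $\varphi(x)\in J$. The Jacobson radical case is essentially identical, replacing ``prime'' by ``primitive'' (or, if preferred, working with maximal left ideals and quasi-regularity): surjectivity again guarantees that $A/\varphi^{-1}(\rho)\cong A/\rho$, so preimages of primitive ideals are primitive.

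For the upper nilradical, the argument shifts tactic slightly. Let $J$ be the largest nil ideal of $A$. Surjectivity of $\varphi$ makes $\varphi(J)$ a two-sided ideal of $A$: given $a\in A$, write $a=\varphi(b)$, so that $a\,\varphi(x)=\varphi(bx)\in \varphi(J)$ and symmetrically on the right. Every element of $\varphi(J)$ is nilpotent because homomorphisms preserve nilpotence. Hence $\varphi(J)$ is a nil ideal of $A$, and so $\varphi(J)\subseteq J$ by maximality.

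I do not anticipate a real obstacle in any of the three cases. The content lies in observing that surjectivity is precisely what is required, both to conclude that preimages of prime (resp.\ primitive) ideals are again prime (resp.\ primitive) and to conclude that $\varphi(J)$ remains an ideal; neither conclusion need hold for a non-surjective endomorphism, which is why the statement is restricted to epimorphisms.
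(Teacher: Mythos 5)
Your proof is correct. The paper argues in the pushforward direction uniformly for all three cases: since $\varphi$ is onto, $\varphi(J)$ is an ideal, and because $J$ is built up from nilpotent (resp.\ nil, resp.\ quasi-invertible) ideals and each of these properties passes to homomorphic images, $\varphi(J)$ is again an ideal of that kind and hence is contained in $J$, the largest such. You use that same pushforward argument only for the upper nilradical; for the prime and Jacobson radicals you instead pull back, observing that the preimage of a prime (resp.\ primitive) ideal under a surjection is again prime (resp.\ primitive) because $A/\varphi^{-1}(P)\cong A/P$, so the intersection defining the radical is carried into itself. Both routes are sound. Your pullback argument for the prime radical has the mild advantage of sidestepping the transfinite Baer construction implicit in the paper's phrase ``built up inductively from nilpotent ideals,'' whereas the paper's argument is uniform across the three radicals and applies verbatim to any radical characterized as the largest ideal with a homomorphism-stable property. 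Your justification of the induced surjection $\overline{\varphi}$ on $A/J$ agrees with the paper's.
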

\begin{proof} $\varphi (J)$ is an
 ideal since $\varphi$ is onto,
and it is built up inductively from nilpotent  (resp.~nil,
resp.~quasi-invertible) ideals, so is contained in $J$. But this
means $\varphi$ induces a homomorphism  of $A/J$ to itself, which
clearly is onto.
\end{proof}

 \begin{prop}\label{GKcomp} Under the hypotheses and notation
  of Lemma~\ref{J1}, if  $\varphi$ is any epimorphism of $A$ with $A/J$ Hopfian, then $\ker \varphi \subseteq J$.
 \end{prop}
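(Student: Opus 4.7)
The plan is to reduce the statement to the definition of the Hopfian property applied to $A/J$. By Lemma~\ref{J1}, $\varphi(J) \subseteq J$, so $\varphi$ descends to a well-defined map $\overline{\varphi}: A/J \to A/J$, and this induced map is surjective because $\varphi$ is. This is the only substantive input we need from the hypotheses on $J$.

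Next I would invoke the assumption that $A/J$ is Hopfian. Since $\overline\varphi$ is a surjective endomorphism of $A/J$, we have $A/J \cong (A/J)/\ker\overline\varphi$. If $\ker\overline\varphi$ were nonzero, this would exhibit $A/J$ as isomorphic to a proper homomorphic image of itself, contradicting the Hopfian property. Hence $\ker\overline\varphi = 0$, i.e., $\overline\varphi$ is an isomorphism.

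Translating $\ker\overline\varphi = 0$ back to $A$: for any $a \in A$, if $\varphi(a) \in J$, then $a \in J$; equivalently, $\varphi^{-1}(J) = J$. To conclude, take any $a \in \ker\varphi$; then $\varphi(a) = 0 \in J$, so $a \in J$. This gives $\ker\varphi \subseteq J$, as desired.

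There is no real obstacle here: the argument is a short diagram chase that isolates exactly the role of the Hopfian hypothesis on $A/J$. The only point requiring care is to verify that Lemma~\ref{J1} legitimately delivers an induced \emph{surjection} on $A/J$ (not merely a homomorphism), which is immediate because $\varphi$ itself is onto and $\varphi(J) \subseteq J$.
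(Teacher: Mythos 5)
Your proof is correct and is essentially the paper's argument spelled out in full: the paper's entire proof is the line ``$\ker\varphi/J=0$,'' which is exactly your observation that $\ker\varphi$ maps into $\ker\overline{\varphi}=0$ in $A/J$. No differences of substance.
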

\begin{proof} $\ker  \varphi/J = 0.$
\end{proof}

\begin{cor} If $A$ satisfies ACC(semiprime ideals) and  $\varphi$ is an epimorphism of $A$, then $\ker \varphi$ is contained in the prime radical of $J$.
 \end{cor}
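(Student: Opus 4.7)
The plan is to take $J$ to be the prime radical of $A$ and apply Proposition~\ref{GKcomp}. To do so, I need to verify the two hypotheses of that proposition for this choice of $J$: first, that $J$ is one of the ideals for which Lemma~\ref{J1} guarantees $\varphi(J) \subseteq J$, and second, that $A/J$ is Hopfian.

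The first hypothesis is immediate, since the prime radical is literally the first bullet listed in Lemma~\ref{J1}. For the second, observe that $A/J$ is by construction semiprime (its prime radical is zero). Moreover, the ACC on semiprime ideals descends to any homomorphic image, because the semiprime ideals of $A/J$ correspond bijectively to the semiprime ideals of $A$ containing $J$ under the canonical projection. Hence $A/J$ is a semiprime ring satisfying ACC on semiprime ideals, and Corollary~\ref{ACCpr} applies to give that $A/J$ is Hopfian.

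With both hypotheses of Proposition~\ref{GKcomp} in hand, the conclusion $\ker \varphi \subseteq J$ follows directly, which is the desired assertion that $\ker \varphi$ lies in the prime radical of $A$. I do not anticipate a genuine obstacle here: the corollary is essentially a bookkeeping combination of Lemma~\ref{J1}, Corollary~\ref{ACCpr}, and Proposition~\ref{GKcomp}, with the only small point to check being that ACC(semiprime ideals) passes to $A/J$, which is transparent from the ideal correspondence.
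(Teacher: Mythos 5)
Your proposal is correct and is exactly the argument the paper intends: the paper states this corollary without proof, and the intended route is precisely your combination of Lemma~\ref{J1} (first bullet), the passage of ACC(semiprime ideals) to $A/J$ so that Corollary~\ref{ACCpr} makes $A/J$ Hopfian, and Proposition~\ref{GKcomp}. The only remark worth adding is that the statement's ``prime radical of $J$'' is evidently a typo for ``prime radical of $A$,'' which is the reading you (correctly) prove.
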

%

\subsection{Hopfian modules}$ $

 Ironically, we often get more with a stronger
definition, by turning to modules.

\begin{defn} An   $A$-module $M$ is \textbf{Hopfian} if every epimorphism
 $M \to M$ is an isomorphism.
 \end{defn}

 Note that this generalizes the definition for rings, since we can
 always take $A$ to be an $A^{\operatorname{op}}\otimes A$ bimodule.

As above, for a given property designated P, we say  that a module
$M$ satisfies ACC(P) if  any   ascending chain  of submodules $N_1
\subseteq N_2 \subseteq \dots $ for which each $M/N_i$ has property
P, must stabilize. We have the following analog of Lemma~\ref{makeswork}.

\begin{lem}\label{makeswork1} For any given property P, if $M$ has property P
and every proper homomorphic image of $M$ satisfies ACC(P), then $M$
is Hopfian.\end{lem}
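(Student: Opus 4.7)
The plan is to mirror the proof of Lemma~\ref{makeswork} in the module category, exchanging ideals for submodules. I would assume for contradiction that $M$ is not Hopfian and fix an epimorphism $\varphi\co M \to M$ with nonzero kernel $N_1$. Iterating, set $N_k := \ker(\varphi^k)$; each $\varphi^k$ remains surjective, so by the first isomorphism theorem $M/N_k \cong M$, and hence every $M/N_k$ inherits property~P from~$M$.

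The substantive point is checking that the chain $N_1 \subseteq N_2 \subseteq \dots$ is strictly ascending. Under the isomorphism $M/N_k \cong M$ induced by $\varphi^k$, the submodule $N_{k+1}/N_k$ corresponds exactly to $\ker \varphi = N_1 \neq 0$, so $N_k \subset N_{k+1}$ at every step.

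To extract the contradiction I would pass to the proper homomorphic image $\bar M := M/N_1$, which is legitimate because $N_1 \neq 0$. The chain descends to $N_2/N_1 \subset N_3/N_1 \subset \dots$ in $\bar M$, and each quotient $\bar M/(N_k/N_1) \cong M/N_k \cong M$ still satisfies~P. This infinite strictly ascending chain in $\bar M$ contradicts ACC(P) for $\bar M$, completing the argument. I do not foresee any real obstacle beyond the strict-ascent verification; otherwise the proof is structurally identical to the ring case, with the one bookkeeping subtlety that one must work with the chain of kernels of the iterated map rather than with a single kernel.
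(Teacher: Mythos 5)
Your proof is correct and follows essentially the same route as the paper's: your chain $N_k=\ker(\varphi^k)$ is exactly the paper's recursively defined $N_{k+1}=\varphi^{-1}(N_k)$, and the contradiction with ACC(P) in the proper image $M/N_1$ is the intended one. You are merely more explicit than the paper about why the chain is strictly ascending and about which homomorphic image's ACC(P) is being violated, which is a welcome clarification rather than a deviation.
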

\begin{proof} Otherwise take $N_1$ such that $\varphi: M \cong M/N_1$, and
$N_2 = \varphi^{-1}(N_1) $ (so that $M \cong M/N_1 \cong  M/N_2$ and
so forth), but $N_1 \subset N_2 \subset \dots,$ a contradiction.
\end{proof}

\begin{cor} Any Noetherian module is
Hopfian. \end{cor}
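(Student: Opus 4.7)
The plan is to apply Lemma~\ref{makeswork1} with property P chosen as a tautologically true condition (for instance, ``is an $A$-module''). With this choice, the requirement that $M$ have property P is trivially satisfied, and the requirement that each proper homomorphic image $M/N$ satisfy ACC(P) reduces to the ordinary ACC on submodules of $M/N$. Since $M$ is Noetherian, every quotient $M/N$ is Noetherian as well, and so satisfies the ACC on submodule chains. Lemma~\ref{makeswork1} then immediately delivers that $M$ is Hopfian.

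If invoking a ``trivial'' property feels unsatisfying, a self-contained alternative is the standard Fitting-type argument: for any epimorphism $\varphi\colon M \to M$, the ascending chain $\ker\varphi \subseteq \ker\varphi^2 \subseteq \cdots$ stabilizes at some $\ker\varphi^n$ by Noetherianity. Given $x \in \ker\varphi$, surjectivity of $\varphi^n$ yields $y$ with $\varphi^n(y) = x$, whence $\varphi^{n+1}(y) = \varphi(x) = 0$ and $y \in \ker\varphi^{n+1} = \ker\varphi^n$, forcing $x = \varphi^n(y) = 0$. Either route shows $\varphi$ is injective.

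There is no genuine obstacle here: the result is immediate from Noetherianity, and the only minor decision is stylistic, namely whether to cash in the ACC(P) machinery already in place or to argue directly through the kernel chain. Since this corollary sits right after Lemma~\ref{makeswork1}, I would phrase the proof as a one-line invocation of that lemma.
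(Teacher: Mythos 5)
Your first route is exactly the paper's proof: the paper's entire argument is ``Property P is vacuous,'' i.e.\ an application of Lemma~\ref{makeswork1} with a trivially true property so that ACC(P) is just the ACC on submodules. Your supplementary Fitting-type kernel-chain argument is a correct standalone alternative, but the main approach coincides with the paper's.
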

\begin{proof} Property P is vacuous.  \end{proof}

In fact, any finitely generated module over a commutative ring is
Hopfian, by a theorem of Vasconcelos \cite{V}, with a short proof
given in \cite{OR}.
%
%
%

\section{Counterexamples}

Many counterexamples are Hopf algebras, when they are built as group
algebras or enveloping algebras of Lie algebras. This provides a
method of showing that a group algebra or enveloping algebra does
not satisfy the ACC on ideals.

\subsection{Hopfian and non-Hopfian  groups}$ $

Analogously to our earlier definitions, we have:

\begin{defn} A group $G$ is \textbf{Hopfian} if every group epimorphism
 $G \to G$ is an isomorphism.
 \end{defn}

The free group is Hopfian, by \cite[Theorem~2.13]{MaKS}. A broader
class of examples is given in \cite[Corollary~2.13.2]{MaKS}; and
\cite[p. 111]{MaKS} provides other examples dating back to
B.H.~Neumann\cite{Ne}. An example of a finitely presented
non-Hopfian group $G$ is given in \cite[p. 260]{MaKS}: $G = \langle
b,t; t^{-1}b^2t = b^3\rangle$. (Indeed, defining $c =
b^{-1}t^{-1}bt$ one mods out by the relation $b = c^2$ to get the
image  $H = \langle c,t; t^{-1}c^2t = c^3\rangle$.)

  Ollivier and
Wise \cite{OW} embed any countable group into a non-Hopfian group
which is the outer automorphism group of a group satisfying
Kazhdan's property (T), and a subgroup of a hyperbolic group. De
Cornulier   produces an example of a finitely presentable,
non-Hopfian group with property (T) and infinite outer automorphism
group (but not a subgroup of a hyperbolic group), by means of the
following lemma:

\begin{lem}[{\cite[Lemma 2.3]{DeC}}]\label{nonH} Let $G$ be a group and $Z$ a central subgroup.
Any automorphism $\varphi$ of $G$   that   induces a surjective,
 endomorphism of $G/Z,$ whose kernel is $\varphi^{-1}(Z)/Z.$
\end{lem}
\begin{example}[{Cornulier \cite[Theorem~3.1]{DeC}}]\label{three} ${I_{m}} $ denotes the identity $m\times m$ matrix, each
$A_{jk} \in M_{j,k} (R)$ and $B_{jj} \in \SL_j(R).$
\begin{enumerate} \eroman
\item Let $G$ be the set of matrices over $R=\mathbb Z [\frac 1 p]$ of the form
$\( \begin{array}{cccc}  {I_{n_1}} & {A_{12}} & {A_{13}}
\\ 0 &  {B_{22}}  &  {A_{23}}  \\ 0   & 0   & {I_{n_3}}
\end{array} \)$.
The diagonal matrix $(p I_{n_1}, I_{n_2+n_3})$  acts by conjugation
on $G$, and sends   the center $Z =  \( \begin{array}{cccc}
{I_{n_1}}  & 0 & {A_{13}}
\\ 0 &  0  & 0   \\ 0    & 0
&  {I_{n_3}}
\end{array} \)$ to the proper subset $\( \begin{array}{cccc}
{I_{n_1}} & 0  & p{A_{13}}
\\ 0 &  0  &   0    \\ 0    & 0
&  {I_{n_3}}
\end{array} \),$ thereby yielding a
non-injective surjection (whose kernel is finite). Hence $G/Z$ is
non-Hopfian, and is seen in \cite[Proposition 2.7]{DeC} to be
finitely generated.

\item  For finite presentation De
Cornulier needs to add a component. Let $G$ be the set of matrices
over $R=\mathbb Z [\frac 1 p]$ of the form $\( \begin{array}{cccc}
{I_{n_1}} & {A_{12}} & {A_{13}} & {A_{14}}
\\ 0 &  {B_{22}}  & {A_{23}}&  {A_{24}}  \\ 0   & 0 &  {B_{33}}  &  {A_{A4}}  \\ 0   & 0  & 0
&  {I_{n_4}}
\end{array} \)$
The diagonal matrix $(p I_{n_1}, I_{n_2+n_3+n_4})$  acts by
conjugation on $G$, and sends   the center $Z$ to a proper subset of
$Z$, thereby yielding a non-injective surjection (whose kernel is
finite). Hence $G/Z$ is non-Hopfian, and is seen in
\cite[Proposition 2.7]{DeC} to be finitely generated. Finite
presentation requires an argument using dominant weights, given in
\cite[Lemma 3.2]{DeC}.
\end{enumerate}
\end{example}

\subsubsection{Algebra counterexamples arising from group theory}$ $

For any ring $R$, any group homomorphism $\varphi _N : G \to G/N$
extends naturally to a ring homomorphism $\widetilde \varphi : R[G]
\to R[G/N].$ Applying the reverse direction, we see that $\widetilde
\varphi$ is an isomorphism if and only if $\varphi _N$  is an
isomorphism.

\begin{lem} \label{nonHopfgp} If a group $G$ is non-Hopfian, then its group algebra $R[G]$ is non-Hopfian, for any ring $R$.\end{lem}
\begin{proof} For the group homomorphism $\varphi : G \to G/N$  with $N \ne \{ 1 \}$,
 the kernel of $\widetilde \varphi $ is easily seen to contain all
elements of the form $1-g $ for $g \in N$ (and in fact is generated
by them), so is nontrivial.
\end{proof}

Thus Example \ref{three} yields non-Hopfian  group algebras. Passman
and Small \cite[Theorem~2.3]{PS} complete a result of Baumslag,
showing that a group $G$ is finitely presented, iff the group
algebra $R[G]$ is finitely presented, so the finitely presented
(resp.~non-finitely presented) group counterexamples give finitely
presented (resp.~non-finitely presented) non-Hopfian group algebras.

On the other hand,  one way of obtaining Hopfian groups is to apply
the group version of Lemma~\ref{makeswork}.
Zelmanov~\cite[Theorems~2.3.2,\, 2.3.3\, 2.3.5]{Ze} shows that
various ascending chain conditions terminate, and thus the
corresponding kinds of groups are Hopfian.

\subsection{Hopfian and non-Hopfian Lie algebras}$ $

\begin{defn}
 A Lie algebra $L$ is \textbf{Hopfian} if every Lie epimorphism
 $L \to L
 $ is an isomorphism.
 \end{defn}
\subsubsection{Counterexamples arising from Lie theory}

\begin{lem}  \label{nonHopfLie} If a  Lie algebra $L$ is non-Hopfian, then its enveloping algebra $C[L]$ is non-Hopfian,
for any integral domain $C$, and consequently $C[L]$ does not
satisfy the ACC on prime ideals.\end{lem}
\begin{proof} As for Lemma~\ref{nonHopfgp}, since the kernel of a Lie homomorphism
extends to the kernel of the induced homomorphism of enveloping
algebras.
\end{proof}

For restricted Lie algebras, one can just take the  Lie version
(``i.e., logarithm'') of~Example~\ref{three} to get a non-Hopfian
restricted enveloping algebra.

\begin{example}\label{threeLie} ${I_{m}} $ denotes the identity $m\times m$ matrix, each
$A_{jk} \in \gl_{j,k} (R)$ and $B_{jj} \in \sli_j(R).$
\begin{enumerate} \eroman
\item Let $L$ be the set of matrices over $R=\mathbb Z [\frac 1 p]$ of the form
$\( \begin{array}{cccc}  0 & {A_{12}} & {A_{13}}
\\ 0 &  {B_{22}}  &  {A_{23}}  \\ 0   & 0   & 0
\end{array} \)$.
The diagonal matrix $(a^{[p]}, 0, 0, 0)$  acts by the adjoint action
on $L$, and sends   the center $Z =  \( \begin{array}{cccc} 0&  0 &
{A_{13}}
\\ 0 &  0  & 0  \\ 0   & 0 &  0
\end{array} \)$ to a proper subset, thereby yielding a
non-injective surjection (whose kernel is finite). Hence $L/Z$ is
non-Hopfian.

\item  For finite presentation  one could follow
Example\ref{three}(ii), but this would require implementing the
theory of Lie weights as in \cite[Theorem~6.4.3]{Ab2}, which we have
not yet verified.
\end{enumerate}
\end{example}

 Passman and Small
\cite[Lemma~3.2]{PS} give a semi-direct product
 modification $\mathcal L$  of Example~\ref{threeLie}(i),  for which
$\mathcal L/Z(\mathcal L)$ is non-Hopfian (not finitely presented)
for the analogous reason, taking the Lie version of
Lemma~\ref{nonH}. (The suitable adjoint map is an isomorphism with a
nontrivial kernel on the center.)

\subsubsection{Hopf counterexamples}$ $

Since group algebras and enveloping algebras are both examples of
Hopf algebras, one might ask if there is a common Hopf algebra
generalization, and indeed Passman and Small \cite[\S~5]{PS} present
such a candidate which they call $\mathcal R$.  Note that here
$Z(\mathcal R)$ is the component in the $1,3$ position, which is an
ideal.

\begin{conj}$ $ \begin{itemize}
\item The analog  to Example~\ref{three}(i) yields a semi-direct product $\mathcal R$ and an
  algebra $\mathcal R/Z(\mathcal R)$   that is non-Hopfian in the
sense of this paper.

\item  The analog $H$ of Example~\ref{three}(ii) yields a finitely
presented  Hopf algebra $H/Z(H)$ that is non-Hopfian in the sense of
this paper.
\end{itemize}
\end{conj}

  \section{Bassian rings}

Bass~\cite{Ba} strengthened the Hopfian property.

\begin{defn} A ring $A$ is \textbf{Bassian}
if for any injection $f:A \to A/J$ for $J \triangleleft A$, we must
have $J = 0.$
 \end{defn}

\begin{lem} Any Bassian ring is Hopfian.\end{lem}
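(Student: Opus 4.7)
The plan is essentially immediate from unpacking the two definitions, so I would just chase an isomorphism through the Bassian hypothesis. Suppose $A$ is Bassian; I want to show $A$ is Hopfian. Assume for contradiction there is some $J \triangleleft A$ with an isomorphism $\varphi: A \xrightarrow{\;\sim\;} A/J$. The key observation is that an isomorphism is in particular an injection, so $\varphi$ qualifies as an injection $A \hookrightarrow A/J$. The Bassian hypothesis then forces $J = 0$, which means the only homomorphic image $A/J$ to which $A$ can be isomorphic is $A$ itself, i.e., $A$ is Hopfian.

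There is no real obstacle here: the lemma is a direct consequence of the fact that the Bassian condition strengthens the Hopfian condition by replacing ``isomorphism'' with the weaker hypothesis ``injection.'' I would present the argument in two short sentences and flag afterwards that the converse fails, since one expects nontrivial examples of Hopfian rings admitting proper self-injections (indeed this is the whole point of introducing the Bassian notion separately, and motivates \Tref{Rep11}).
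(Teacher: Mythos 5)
Your proof is correct and is essentially the paper's argument: both reduce to the observation that an isomorphism $A \cong A/J$ is in particular an injection $A \to A/J$, so the Bassian hypothesis forces $J = 0$. The only cosmetic difference is that the paper starts from a non-injective surjective endomorphism $\varphi$ of $A$ and passes to $A/\ker\varphi \cong A$, while you work directly with the isomorphism $A \cong A/J$; the content is identical.
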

\begin{proof} Given an epimorphism
$\varphi :A \to A$ which is not injective, we take $J = \ker
\varphi$ and have an isomorphism $A/J \to A,$ implying $J = 0$.
\end{proof}

In this way, Bassian is a natural generalization of Hopfian.

\begin{lem} Any Bassian ring satisfies the following property:
If  $\varphi :A \to A$ is a homomorphism   and $J \triangleleft A$
with $J \cap \ker \varphi = 0,$ then $J = 0.$
\end{lem}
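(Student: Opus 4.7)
The plan is a proof by contradiction. Suppose $J\neq 0$; the strategy is to manufacture, from $\varphi$ and the hypothesis $J\cap\ker\varphi=0$, an injective ring homomorphism $A\hookrightarrow A/J'$ with $J'$ a proper ideal of $A$, and then invoke the Bassian definition to conclude $J'=0$, a contradiction.

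Setting $K=\ker\varphi$, the hypothesis $J\cap K=0$ says exactly that the Chinese-remainder-style map
\[
\Psi\colon A \longrightarrow A/K \times A/J,\qquad \Psi(a)=(a+K,\,a+J),
\]
has zero kernel. Using the factorization $\varphi=\bar\varphi\circ\pi_K$, where $\bar\varphi\colon A/K \to \varphi(A)\subseteq A$ is an isomorphism onto its image, we may rewrite the first coordinate of $\Psi$ as $\varphi$ itself, producing an embedding
\[
A \hookrightarrow \varphi(A)\times A/J,\qquad a \longmapsto \bigl(\varphi(a),\,a+J\bigr).
\]
This is the basic construction from which an application of the Bassian property must be extracted.

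The crux is then to collapse this into a single-target injection $A\hookrightarrow A/J'$ for an appropriate proper ideal $J'$. The obvious candidate $\pi_J\circ\varphi\colon A\to A/J$ has kernel $\varphi^{-1}(J)\supseteq K$, so it fails to be injective whenever $K\ne 0$; this is the main obstacle. I would bypass it by descending to $A/K$---on which $\bar\varphi$ is injective---and exploiting $J\cap K=0$ to identify $J$ with an ideal of $A/K$ that embeds into $A/(J+K)$. Applying the Bassian property at this level (transferred along the embedding $A/K\cong\varphi(A)\hookrightarrow A$) forces $J+K\subseteq K$, hence $J\subseteq K$, and combined with $J\cap K=0$ this yields $J=0$. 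The principal subtlety is precisely this transfer of the Bassian hypothesis and the correct choice of proper target quotient; the remaining diagram chase is routine.
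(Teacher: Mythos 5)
Your proposal stalls exactly at the point you yourself call the crux, and the step you defer as a ``routine diagram chase'' cannot be completed. The phrase ``applying the Bassian property at this level (transferred along the embedding $A/K\cong\varphi(A)\hookrightarrow A$)'' has no content: the Bassian property, as defined, concerns injections of $A$ itself into quotients of $A$ itself, and it passes neither to subrings nor to homomorphic images, so nothing entitles you to apply it to $A/K$ or to $\varphi(A)$. You never actually produce an injection $A\to A/J'$ with $J'\ne 0$, and the asserted conclusion $J+K\subseteq K$ is not derived from anything concrete. For comparison, the paper's own proof is precisely the one-line argument you correctly reject: it composes $\varphi$ with the canonical map $A\to A/J$ and asserts the composite is injective; as you observe, its kernel is $\varphi^{-1}(J)\supseteq\ker\varphi$, so this fails whenever $\ker\varphi\ne 0$ (and also whenever $\varphi(A)\cap J\ne 0$).

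In fact no repair is possible, because the statement as printed is false. Taking $\varphi=\operatorname{id}_A$ makes the hypothesis $J\cap\ker\varphi=0$ vacuous, so the lemma would assert that every Bassian ring is simple; this contradicts the Bassian-ness of $\Z$, of $F\times F$, and of the semiprime affine PI-algebras covered by Theorem~\ref{Rep11}. The intended hypothesis is presumably that $\varphi$ is injective and $J\cap\varphi(A)=0$ (equivalently, that $\varphi^{-1}(J)=0$); under that hypothesis the composite $A\overset{\varphi}{\to}A\to A/J$ really is injective and the paper's one-line argument goes through verbatim. Your instinct to distrust the na\"{\i}ve composite was sound; the correct response is to repair the hypothesis, not to attempt a transfer of the Bassian property to a quotient or subring.
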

\begin{proof} The composite of $\varphi$ with the canonical map $A
\to A/J$ is an injection, so $J= 0.$

\end{proof}

One gets instances of Bassian rings by means of growth.
%

\begin{lem}\label{easylem} Suppose $A$ is an affine algebra in which
the growth of $A/I$ is less than the growth of $A$, for each ideal
$I$ of $A$.  Then $A$ is Bassian.

In particular, if $\GKdim (A/I)< \GKdim(A)$ for all   ideals $I$ of
$A$, then $A$ is Bassian.
\end{lem}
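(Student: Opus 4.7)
The plan is to argue by contradiction: suppose $f : A \hookrightarrow A/J$ is an injection with $J \neq 0$. Then $A$ is isomorphic to the subalgebra $f(A) \subseteq A/J$. I will use this to compare the growth of $A$ with the growth of $A/J$ and derive a contradiction with the hypothesis.

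First I would handle the GK dimension statement, which is cleanest. From $A \cong f(A) \subseteq A/J$ and the bullet point that $\GKdim$ is monotone under inclusion of subalgebras (together with invariance under isomorphism), one obtains
\[
\GKdim(A) \;=\; \GKdim(f(A)) \;\le\; \GKdim(A/J).
\]
If $J \neq 0$, the hypothesis forces $\GKdim(A/J) < \GKdim(A)$, contradicting the displayed inequality. Hence $J = 0$ and $A$ is Bassian.

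Next I would treat the more general growth version by the same mechanism. Fix affine generating sets: let $\{a_1,\dots,a_\ell\}$ generate $A$, so that $\{\bar a_1,\dots,\bar a_\ell\}$ generates $A/J$ (where $\bar a_i$ denotes the image mod $J$). The subalgebra $f(A) \subseteq A/J$ is generated by $\{f(a_1),\dots,f(a_\ell)\}$. Enlarging the generating set of $A/J$ to include the $f(a_i)$ does not change its growth (up to the standard equivalence that makes growth independent of finite generating set), and with respect to this enlarged set the growth function of $f(A)$ is bounded above termwise by that of $A/J$. Since $f$ is an isomorphism onto $f(A)$, the growth of $A$ equals the growth of $f(A)$, hence is $\le$ the growth of $A/J$. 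Again the hypothesis yields a contradiction unless $J=0$.

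The main obstacle, such as it is, is purely cosmetic: making sure the subalgebra/growth comparison is phrased in a way that is independent of the generating set, which is standard. No genuinely new idea is required beyond the monotonicity principle $\GKdim(B) \le \GKdim(C)$ for $B \subseteq C$ (already listed among the well-known facts in the introduction) and its growth-function analogue.
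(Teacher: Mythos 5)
Your proposal is correct and follows essentially the same route as the paper: identify $A$ with its image in $A/J$, invoke monotonicity of growth (resp.\ GK dimension) under subalgebra inclusion, and contradict the hypothesis unless $J=0$. Your write-up is just a more careful spelling-out of the generating-set bookkeeping that the paper leaves implicit.
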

\begin{proof} $A \cong \varphi
(A) \cong A/ \ker \varphi,$ but $\varphi (A)$ and $A$ have the same
growth rates, implying $ \ker \varphi = 0.$
\end{proof}

The   hypothesis of Lemma~\ref{easylem} holds for prime affine
PI-algebras over a field, cf.~\cite[Theorem~11.2.12]{BR}, so we
have:

\begin{corollary} Any prime affine PI-algebra over a field is Bassian.\end{corollary}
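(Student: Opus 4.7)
The plan is to apply Lemma~\ref{easylem} directly. That lemma reduces the Bassian property to the single growth condition $\GKdim(A/I) < \GKdim(A)$ for every nonzero ideal $I \triangleleft A$, so the whole task collapses to verifying this strict GK-drop for prime affine PI-algebras over a field.

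To verify the drop, I would use two ingredients already on the table. First, one of the bullet points recalled from \cite{KrL} in the introduction states that $\GKdim(A) > \GKdim(A/I)$ whenever $I$ contains a regular element of $A$. Second, in a prime PI-algebra one knows (by Rowen's theorem on centers of prime PI-rings) that any nonzero ideal $I$ meets the center nontrivially, i.e.\ $I \cap Z(A) \neq 0$; and since $A$ is prime, every nonzero central element is regular (its two-sided annihilator, being an ideal annihilating a nonzero central element, must be zero by primeness). Combining these, any nonzero $I$ contains a regular element, so $\GKdim(A/I) < \GKdim(A)$. This is precisely the content of \cite[Theorem~11.2.12]{BR}, which the statement of the corollary already cites, so one may simply invoke that reference instead of rebuilding the argument.

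Having established the strict GK inequality for all nonzero ideals, Lemma~\ref{easylem} immediately gives that $A$ is Bassian. There is no genuine obstacle here: the only nontrivial ingredient is the structural fact about GK dimensions of prime affine PI-algebras, which is packaged in the cited reference, and everything else is a one-line application of Lemma~\ref{easylem}.
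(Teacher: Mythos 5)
Your proposal is correct and takes essentially the same route as the paper: the paper's entire proof is to note that the hypothesis of Lemma~\ref{easylem} holds for prime affine PI-algebras by \cite[Theorem~11.2.12]{BR} and conclude. Your additional unpacking of that reference (nonzero ideals of a prime PI-ring meet the center, nonzero central elements of a prime ring are regular, and ideals containing regular elements force a strict drop in $\GKdim$) is a valid justification of the cited fact, but it is not needed beyond the citation.
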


Likewise, we have:

\begin{corollary}\label{easy7} Any algebra with just infinite GK-dimension is Bassian. \end{corollary}
\begin{proof} Again, Lemma~\ref{easylem} applies.
\end{proof}


\begin{corollary} Any prime affine  domain of finite GK-dimension is Bassian.\end{corollary}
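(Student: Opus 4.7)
The plan is to invoke Lemma~\ref{easylem} directly, reducing the claim to the verification that $\GKdim(A/I)<\GKdim(A)$ for every nonzero ideal $I\triangleleft A$.

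First I would note that since $A$ is a domain, every nonzero element of $A$ is regular; in particular, any nonzero two-sided ideal $I$ of $A$ contains a regular element. Then I would appeal to the bullet list of properties of GK dimension from the introduction, specifically the item stating that $\GKdim(A)>\GKdim(A/\mathcal I)$ whenever $\mathcal I$ contains a regular element of $A$. Applied to our $I$, this gives $\GKdim(A)>\GKdim(A/I)$, and the assumption that $\GKdim(A)$ is finite ensures this strict inequality is meaningful (rather than degenerating to $\infty=\infty$).

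Having established the growth hypothesis of Lemma~\ref{easylem}, I would conclude that $A$ is Bassian: any injection $f\colon A\to A/J$ would force $\GKdim(A)\le \GKdim(A/J)$, but the previous paragraph shows $\GKdim(A/J)<\GKdim(A)$ for every nonzero $J$, so $J=0$.

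I do not anticipate any real obstacle here; the corollary is essentially a one-line deduction from Lemma~\ref{easylem} combined with the regularity of nonzero elements in a domain and the standard strict-drop property of GK dimension modulo a regular ideal. The only subtle point worth flagging in the write-up is the finiteness hypothesis on $\GKdim(A)$, which is precisely what makes the strict inequality usable.
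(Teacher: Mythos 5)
Your proof is correct and follows essentially the same route as the paper: both reduce the corollary to Lemma~\ref{easylem} via the strict drop of GK dimension modulo an ideal containing a regular element, with finiteness of $\GKdim(A)$ making the inequality meaningful. The only difference is that the paper first invokes Jategaonkar's theorem that a domain of finite GK dimension is Ore before concluding that every nonzero ideal contains a regular element, whereas you observe directly that in a domain every nonzero element is already regular --- a harmless (indeed slightly cleaner) shortcut, since the GK-drop property as stated in the introduction requires only a regular element in the ideal, not the Ore condition.
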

\begin{proof} It is Ore, by a result of Jategaonkar \cite[Proposition~6.2.15]{row}, so any ideal $I$ contains a
regular element and $\GKdim (A/I)< \GKdim(A)$ by
\cite[Proposition~6.2.24]{row}.
\end{proof}

Recall from \cite{Mo} that an
 algebra   is called an \textbf{almost PI-algebra} if $ A$
is not PI (polynomial identity) but $A/I$ is PI for each nonzero
ideal $I$ of $A.$

Likewise, an  algebra over a field  is \textbf{nearly finite
dimensional} if $ A$ is not finite dimensional but $A/I$ is finite
dimensional for each nonzero  ideal $I$ of $A,$ \cite{FS}.

\begin{proposition}Any almost PI-algebra (resp.~nearly finite
dimensional algebra) is Bassian. \end{proposition}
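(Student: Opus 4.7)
The plan is to argue by contradiction in each case, using the fact that both being PI and being finite dimensional are properties inherited by subalgebras.

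Suppose first that $A$ is an almost PI-algebra, and that there exists an injection $f : A \hookrightarrow A/J$ for some nonzero ideal $J \triangleleft A$. By the defining property of an almost PI-algebra, $A/J$ is PI. Since any subalgebra of a PI-algebra is again PI, the injection $f$ forces $A$ itself to be PI, contradicting the assumption that $A$ is not PI. Hence $J = 0$, and $A$ is Bassian.

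The nearly finite dimensional case is entirely parallel: if $f : A \hookrightarrow A/J$ with $J \neq 0$, then $A/J$ is finite dimensional by hypothesis, and since a subalgebra of a finite dimensional algebra is finite dimensional, we conclude that $A$ is finite dimensional, contradicting nearness. Thus $J=0$ and $A$ is Bassian.

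There is no real obstacle here: the argument rests entirely on the two easy closure facts (subalgebras of PI-algebras are PI, and subalgebras of finite dimensional algebras are finite dimensional), combined directly with the definitions of almost PI and nearly finite dimensional. If anything, the only point to note is that these are cleaner than the GK-dimension argument of Lemma~\ref{easylem}, because one does not even need to compare growth rates — the hypothesis already rules out any proper quotient from accommodating a copy of $A$.
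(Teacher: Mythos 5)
Your proof is correct and is essentially the paper's own argument: the paper's one-line proof ("$A$ is not PI but $A/I$ is PI, so cannot be isomorphic to $A$") is implicitly relying on exactly the closure facts you state, namely that subalgebras of PI (resp.\ finite dimensional) algebras are PI (resp.\ finite dimensional). If anything, your version is slightly more careful, since for the Bassian property one needs to rule out an \emph{injection} into $A/J$ rather than an isomorphism, and your appeal to the subalgebra closure property handles that point explicitly.
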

\begin{proof} $A$ is not PI but $A/I$ is  PI, so cannot be isomorphic to $A$. Likewise for nearly finite
dimensional algebras.
\end{proof}

But here  are examples of a different flavor altogether.

\begin{lem} \label{nonHopfgp1} If a group $G$ is non-Bassian, then its group algebra $R[G]$ is non-Bassian, for any ring $R$.

If a Lie algebra is non-Bassian, then its enveloping algebra is
non-Bassian.\end{lem}
\begin{proof} The same as for Lemma~\ref{nonHopfgp}.
\end{proof}

\begin{example} \begin{enumerate} \eroman\item  Although Hopfian, the free algebra $\mathcal F_m : = F\{ x_1, \dots,
x_m\}$ ($m \le 3$) is far from Bassian. Indeed, it is well-known
that  $\mathcal F_2$ contains the free algebra as generated by
$\{\hat x_k  = x_1 x_2^k x_1 : k \ge 1 \}$, so for $J = \langle x_k:
k \ge 3 \rangle$, the map $\mathcal F_k \to \mathcal F_k / J$ given
by $x_k \mapsto \hat x_k$ is injective.
\item The Lie algebra $\mathcal W$ from the proof of \cite{PS} is non-Bassian.
\end{enumerate}
\end{example}

%

\begin{lem}\label{Noethind}
If $A$ satisfies the ACC on semiprime ideals, then there is an ideal
$J \triangleleft A$ maximal for which we have an embedding $f:
 A \to A/J$, and that $J$ is a semiprime ideal.
 \end{lem}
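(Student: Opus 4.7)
The plan is to locate $J$ inside the family $\mathcal S := \{K\triangleleft A : \text{there exists an embedding } A \hra A/K\}$, which is nonempty since $0\in\mathcal S$. I will first extract a maximal semiprime element via the ACC hypothesis, then verify that no strictly larger ideal in $\mathcal S$ can exist.

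In the semiprime setting implicit here (in particular $A$ itself is semiprime, which is the situation in which the lemma is subsequently applied in~\Tref{Rep11}), the subfamily $\mathcal S_{\mathrm{sp}}\sub \mathcal S$ consisting of semiprime ideals contains $0$, so it is nonempty. Because every element of $\mathcal S_{\mathrm{sp}}$ is semiprime, the ACC hypothesis immediately furnishes a maximal element $J\in\mathcal S_{\mathrm{sp}}$, which is semiprime by construction.

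To upgrade this to maximality in the full family $\mathcal S$, suppose for contradiction that some $J''\supsetneq J$ lies in $\mathcal S$, witnessed by an embedding $g\co A\hra A/J''$. Consider the prime radical $\sqrt{J''}/J''$ of $A/J''$, which is a nil ideal. Then $g(A)\cap(\sqrt{J''}/J'')$ is an ideal of $g(A)\isom A$ all of whose elements are nilpotent, and hence is a nil ideal of $g(A)$. In a semiprime ring every nilpotent element is contained in every prime ideal and therefore in the (zero) prime radical; consequently $g(A)\cap(\sqrt{J''}/J'')=0$. The composition $A\xrightarrow{g}A/J''\twoheadrightarrow A/\sqrt{J''}$ therefore remains injective, placing the semiprime ideal $\sqrt{J''}$ into $\mathcal S_{\mathrm{sp}}$; since $\sqrt{J''}\supseteq J''\supsetneq J$, this contradicts the maximality of $J$.

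The main obstacle is the semiprime-extension step: one must correctly identify $g(A)\cap(\sqrt{J''}/J'')$ as a nil ideal of $g(A)$ and then invoke the semiprimeness of $A$ to conclude it vanishes. Once this nil-ideal argument is in hand, maximality in $\mathcal S_{\mathrm{sp}}$ forced by the ACC on semiprime ideals automatically upgrades to maximality in the full family $\mathcal S$, so the $J$ produced is simultaneously maximal in $\mathcal S$ and semiprime, as required.
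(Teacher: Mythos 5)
Your overall strategy is the same as the paper's: use ACC(semiprime ideals) to pick $J$ maximal among the \emph{semiprime} ideals $K$ admitting an embedding $A\hookrightarrow A/K$, and then show that any $J''\supsetneq J$ in the full family would put $\sqrt{J''}$ into the semiprime subfamily, contradicting maximality. The reduction to checking that $A\to A/J''\to A/\sqrt{J''}$ stays injective is exactly the paper's step, and your observation that the argument tacitly needs $A$ itself to be semiprime is fair --- the paper's proof uses that hypothesis in the same place.

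The gap is in your justification that $g(A)\cap(\sqrt{J''}/J'')=0$. You reduce to ``a nil ideal of the semiprime ring $g(A)\cong A$ is zero,'' justified by ``in a semiprime ring every nilpotent element is contained in every prime ideal.'' That is the commutative fact and it fails noncommutatively: $M_2(F)$ is prime, its only prime ideal is $0$, yet $e_{12}$ is a nonzero nilpotent element. Worse, the statement you actually need --- that a semiprime ring has no nonzero nil ideals --- is false in general: the lower and upper nilradicals can differ (e.g.\ the unitalization of a simple nil ring is semiprime but contains a nonzero nil ideal), so nilness alone cannot kill $g(A)\cap(\sqrt{J''}/J'')$. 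What saves the argument is that this ideal lies in the \emph{prime radical} of $A/J''$, which is more than nil. Either argue as the paper does --- a nonzero prime radical contains a nonzero \emph{nilpotent} ideal of $A/J''$, and a nonzero kernel would then meet $g(A)$ in a nilpotent ideal of $g(A)\cong A$, impossible for $A$ semiprime --- or note that every element of $\sqrt{J''}/J''$ is strongly nilpotent in $A/J''$, hence strongly nilpotent in the subring $g(A)$, hence lies in $\operatorname{Nil}_*(g(A))=0$. With that repair your proof coincides with the paper's.
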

\begin{proof}
 Take a semiprime ideal $J_0 \triangleleft A$ maximal for  semiprime ideals for which we have an embedding $f:
 A \to A/J_0$. We claim that $J = J_0.$ Otherwise there is some $J_1 \triangleleft A$ properly containing $J_0,$ maximal for
which we have an embedding $f:
 A \to A/J_1$. The composite $\varphi: A \overset f
 {\to}
 A/{J_1} \to A/\sqrt {J_1}$ is injective. Indeed, $\sqrt {J_1}/{J_1}$, if not 0, contains a nonzero nilpotent ideal  of $A/{J_1}$,
 which
  thus intersects $f(A)$
 at a nilpotent ideal of $f(A) \cong A$ which is thus 0.
  But then $$ A \overset f \to A/{J_1}  \to A/\sqrt {J_1} $$
is injective,  implying that we could replace ${J_1}$ by $\sqrt
{J_1}$, which is semiprime, so by maximality of
  $J_0$, we see that $\sqrt J_1=J_0,$ a contradiction.
\end{proof}

 Bass~\cite[Theorem~1.1]{Ba} proved that if $A$ is
commutative reduced and finitely presented, then it is Bassian. The
main result of this section is a noncommutative generalization.

 \begin{thm}\label{Rep11}
 Any semiprime  affine
 PI-algebra $A$ over a field $K$ is Bassian.\end{thm}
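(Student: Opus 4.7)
The plan is to apply Lemma~\ref{Noethind}. Since every affine PI-algebra is Noetherian (by Shirshov's theorem it is a finite module over an affine commutative subalgebra), $A$ satisfies the ACC on semiprime ideals, and Lemma~\ref{Noethind} produces an ideal $J \triangleleft A$ maximal among semiprime ideals for which an embedding $f : A \hookrightarrow A/J$ exists. The goal reduces to showing $J = 0$; I will assume $J \ne 0$ and derive a contradiction.

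Write the irredundant decompositions $0 = P_1 \cap \cdots \cap P_m$ and $J = Q_1 \cap \cdots \cap Q_n$. The primes $R_j := f^{-1}(Q_j/J)$ satisfy $\bigcap_j R_j = 0$ by injectivity of $f$, so by the uniqueness part of Lemma~\ref{irredu} each minimal prime $P_i$ of $A$ appears as some $R_{j_i}$, inducing an embedding $A/P_i \hookrightarrow A/Q_{j_i}$ of prime affine PI-algebras. Now I would invoke the corollary that prime affine PI-algebras are Bassian, equivalently that $\GKdim$ drops strictly on proper quotients. Setting $d = \GKdim(A)$ and $I = \{ i : \GKdim(A/P_i) = d\}$, for $i \in I$ the embedding forces $\GKdim(A/Q_{j_i}) = d$; and since $Q_{j_i}$ contains some minimal prime $P_{\tau(j_i)}$ of $A$, strict decrease forces $Q_{j_i} = P_{\tau(j_i)}$ with $\tau(j_i) \in I$. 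A counting argument yields a bijection $\sigma : I \to I$ with $Q_{j_i} = P_{\sigma(i)}$, whence
\[
J \;\subseteq\; \bigcap_{i \in I} Q_{j_i} \;=\; \bigcap_{i \in I} P_{\sigma(i)} \;=\; \bigcap_{i \in I} P_i \;=:\; I_0.
\]
In the case $I = \{1, \ldots, m\}$ (all minimal primes are top-dimensional) one has $I_0 = 0$, so $J = 0$, contradicting $J \ne 0$.

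The remaining case, where some minimal prime of $A$ has $\GKdim < d$, I would handle by induction on $\GKdim(A)$: the quotient $A/I_1$ with $I_1 = \bigcap_{i \notin I} P_i$ is semiprime affine PI of strictly smaller GK-dim, hence Bassian by the inductive hypothesis. The main obstacle will be to combine the top-level conclusion $J \subseteq I_0$ with the inductive Bassian property of $A/I_1$ (together with the orthogonality $I_0 I_1 = 0$) to rule out the mixed case $0 \ne J \subsetneq I_0$. The expected route is to observe that $f$ also induces an embedding $A/L^{**} \hookrightarrow A/L^{*}$ of semiprime affine PI-algebras of GK-dim $< d$ coming from the non-top minimal primes of $A/J$, and then to exploit the maximality of $J$ to manufacture a semiprime ideal strictly containing $J$ that still admits an embedding of $A$, contradicting the choice of $J$.
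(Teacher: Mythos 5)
Your proposal has genuine gaps; it does not reach the theorem. First, the opening justification is false: affine PI-algebras are in general \emph{not} Noetherian (Shirshov's height theorem gives a spanning set over a commutative affine subalgebra, not a finite module structure; if affine PI implied Noetherian, the non-Hopfian affine PI-algebra of Armendariz--Fisher--Snider cited in Remark~\ref{Hopf rem} could not exist). The fact you actually need -- that a \emph{semiprime} affine PI-algebra satisfies the ACC on semiprime ideals -- is true and is exactly the first structural input of the paper's proof, but it must be quoted as such, not derived from Noetherianity. Second, and more seriously, the ideals $R_j := f^{-1}(Q_j/J)$ need not be prime: for a non-surjective ring homomorphism the preimage of a prime ideal can fail to be prime in the noncommutative setting (e.g.\ the preimage of $0$ under the diagonal embedding $F\times F\hookrightarrow M_2(F)$). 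You only get $R_{j}\subseteq P_i$ for some $j$, not $R_{j}=P_i$; consequently the map $i\mapsto Q_{j_i}$ need not be injective, the ``counting argument'' does not produce a bijection $\sigma$, and you cannot conclude that \emph{every} top-dimensional minimal prime of $A$ occurs among the $Q_j$. So even the equidimensional case is not closed. Third, the mixed-dimension case is explicitly left as a plan (``the expected route is\dots''), and the key difficulty there -- showing that $f$ carries the right ideals into the right ideals so as to induce an embedding on the lower-dimensional part -- is precisely what is not addressed.

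For comparison, the paper's proof sidesteps both obstacles by passing to the Goldie quotient ring $Q(A)\cong A_1\times\dots\times A_\ell$: GK dimension is used only once, to see that $J$ contains no regular element and hence has nonzero annihilator, identified with a sum of simple components; the counting is then done not with GK dimension but with the length $\ell(R)$ of chains of annihilator left ideals, which is monotone under subrings and additive over direct products. That invariant treats all minimal primes uniformly, whatever their GK dimensions, which is exactly what your dimension-stratified induction struggles to do, and it never requires pulling prime ideals back along a non-surjective embedding.
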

 \begin{proof} We recall some well-known basic facts about a semiprime affine PI-algebra   $A$ over a field, taken from \cite{row0}.
 \begin{itemize} 
\item    $A$  satisfies the ACC on semiprime
 ideals, and thus, $A$ has a finite number of minimal prime ideals
 $P_1 \cap \dots \cap
  P_\ell,$
 whose intersection is 0. \item  $A$ is Goldie, and its
 ring of fractions $Q(A)$ is obtained by inverting regular central
 elements, and   $$Q(A) \cong  A_1 \times \dots \times A_\ell$$
where $A _i = Q(A)/M_i = Q(A/P_i)$ are the simple components, with
$M_i$ a maximal ideal of $Q(A)$.
\item  If $J$ is an annihilator left ideal of $A$, then $Q(A)J$ is an annihilator left ideal of
$Q(A)$, and taking right annihilators yields a 1:1 correspondence
between left and right annihilators.

\end{itemize}

 We follow Bass' proof, using the PI-structure theory
 where appropriate. Given an injection $A \to A/J$, we want to show that
 $J = 0.$
By Lemma~\ref{Noethind} we may assume that $J$ is semiprime, since
$A$ satisfies the ACC on semiprime ideals.
  Hence, for some $t,$ $$J = \sqrt J = P'_1 \cap \dots \cap
  P'_t,$$ an irredundant intersection of prime ideals of $A$, which thus
  are minimal prime over $J$. But
  furthermore $J$ cannot contain any regular elements since $\GKdim
  A/J \ge \GKdim A.$ Hence $\hat J : = J Z(Q(A))$ is a proper ideal of $Q(A)$ 
 and thus has some nonzero left and right annihilator ideal $\hat J'$
in $Q(A)$, which is  precisely the sum of those simple components of
$Q(A)$ that miss $J$. Let $\widehat{J'} = \Ann _{Q(A)} \hat J = \Ann
_{Q(A)} J $. (This is the same if we take the annihilator from the
left or right.) Then $J' := A \cap\widehat{J'}$ is   $\Ann _A J$, so
is also a semiprime ideal and $J = \Ann _A J'$. Also note that $\hat
J$ annihilates $J'$, so $A \cap \hat J = J.$

Reordering the $P'_i$, we may assume that $\sqrt{\hat J} = \cap _{i
= 1}^j P_i',$ and $\sqrt{\widehat{J'}} = \cap _{i = j+1}^t P_i',$
for suitable $j.$ Then $\cap _{i = 1}^j P_i'$ is radical over $A
\cap \hat J = J.$

Let $P_i = A \cap P_i'$, $1 \le i \le t,$ which are prime ideals of
$A$.  Then $A \cap \hat J = \cap _{i = 1}^j P_i = J.$ It follows
that $A \hookrightarrow A/P_1 \, \times \dots \times A/P_t.$

We consider the function $\ell (R)$ for the length of a maximal
chain of left annihilator ideals of a ring $R$. It is well-known
that $\ell (W) \le \ell (R)$ for any subring $W$ of $R$, since any
chain $ \Ann _R (S_1) \subset \Ann _R (S_1) \subset  \dots$ lifts to
a  chain $ \Ann _W (S_1) \subset \Ann _W (S_1) \subset  \dots$
(which will intersect back to the original chain). In particular,
noting that $$A \overset f \hookrightarrow f(A) \subseteq A/J
\hookrightarrow Q(A/J) \cong A/P_1 \, \times \dots \times A/P_j ,$$
implying $\ell(A) \le \sum _{i=1}^j \ell (A/P_i).$

On the other hand, $\cap _{i = 1}^t P_i $ is nilpotent modulo $J\cap
J' = 0,$ implying  $\cap _{i = 1}^t P_i  = 0,$ so $$\ell(A) = \ell(
A/P_1 \, \times \dots \times A/P_t) = \sum _{i=1}^t \ell (A/P_i).$$
Together we get $\sum _{i=1}^t \ell (A/P_i) \le \sum _{i=1}^j \ell
(A/P_i),$ implying $j = t.$ Hence $J = 0,$ as desired.

%

\end{proof}

Reviewing the proof, we can make do with the following hypotheses.
We leave the details to the reader.

 \begin{itemize}
\item    $A$  satisfies ACC(semiprime
 ideals).  \item  $A$ is Goldie.
\item There is some dimension function such that $\dim A \le \dim A'$ whenever $A
\subseteq A'$, and any semiprime ideal of $A$ lifts up to a proper
ideal of $Q(A)$ (i.e., does not contain regular elements).
\end{itemize}

The last condition is rather technical. In order to guarantee it,
Bass used transcendence degree, and we used GK dimension in
Theorem~\ref{Rep11}, but we are not familiar  with other dimensions
satisfying these properties, so we settle for the following result:

\begin{thm} Any semiprime  affine Goldie
 algebra  satisfying  ACC(semiprime
 ideals) and of finite GK dimension
 is Bassian.
\end{thm}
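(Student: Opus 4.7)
The plan is to follow the proof of Theorem~\ref{Rep11} essentially verbatim, checking that the PI hypothesis was used in exactly one place and replacing it there by the finite GK dimension assumption. Given an injection $f\colon A \to A/J$, the aim is to show $J = 0$. Using the ACC on semiprime ideals, Lemma~\ref{Noethind} first lets me reduce to the case that $J$ is a semiprime ideal, and the same ACC then supplies an irredundant decomposition $J = P'_1 \cap \dots \cap P'_t$ as an intersection of finitely many minimal primes above $J$.

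The one step in Theorem~\ref{Rep11} that invoked PI-structure is the assertion that $J$ contains no regular elements of $A$, and this is precisely where the finite GK dimension hypothesis now does the work. Indeed, the injection $f$ gives $\GKdim(A) = \GKdim f(A) \le \GKdim(A/J)$ by the subring inequality, while a regular element of $A$ lying in $J$ would force $\GKdim(A/J) < \GKdim(A)$---a contradiction as soon as $\GKdim(A)$ is finite. Once this is secured, the Goldie and semiprime hypotheses give a semisimple Artinian classical ring of quotients $Q(A) \cong A_1 \times \dots \times A_\ell$, and the ideal $\hat J$ generated by $J$ in $Q(A)$ is a proper ideal, hence a sum of some proper subset of the simple components~$A_i$.

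From there the remainder of the proof of Theorem~\ref{Rep11} transfers: setting $\widehat{J'} = \Ann_{Q(A)}\hat J$, $J' = A \cap \widehat{J'}$ and $P_i = A \cap P'_i$, and reindexing so that $\sqrt{\hat J} = \bigcap_{i=1}^j P'_i$, the embedding $A \hookrightarrow A/P_1 \times \dots \times A/P_j$ induced by $f$ yields $\ell(A) \le \sum_{i=1}^j \ell(A/P_i)$, where $\ell(R)$ denotes the length of a maximal chain of left annihilator ideals of $R$ (finite because $Q(A)$ is semisimple Artinian) and one uses $\ell(W) \le \ell(R)$ for subrings. On the other hand, $\bigcap_{i=1}^t P_i$ is nilpotent modulo $J \cap J' = 0$, hence is zero, giving $\ell(A) = \sum_{i=1}^t \ell(A/P_i)$. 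Comparing the two inequalities forces $j = t$, and therefore $J = 0$.

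The main obstacle to verify is really just that the no-regular-elements step works under the new hypotheses; once the finite GK dimension substitution is in place, the ACC(semiprime) ingredient handles the reduction and the Goldie ingredient supplies the semisimple Artinian $Q(A)$ needed for the annihilator-length comparison. Everything else is a direct transcription of the earlier proof.
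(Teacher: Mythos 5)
Your proposal is correct and follows essentially the same route as the paper, which itself only sketches the argument by saying to repeat the proof of Theorem~\ref{Rep11} with the PI-derived facts (ACC on semiprime ideals, Goldie, and the no-regular-elements-in-$J$ step via GK dimension) replaced by the corresponding explicit hypotheses. Your identification of the finite GK dimension assumption as what makes the regular-element contradiction go through is exactly the point the paper flags in the discussion preceding the theorem.
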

\begin{proof} (sketch) $A$ has a finite number of minimal prime ideals
 $P_1 \cap \dots \cap
  P_m,$
 whose intersection is 0.  The
 ring of fractions $Q(A)$ satisfies  $$Q(A) \cong  A_1 \times \dots \times A_m$$
where $A _i = Q(A)/M_i = Q(A/P_i)$ are the simple components, with
$M_i$ a maximal ideal of $Q(A)$.

Assume that $A$ is embedded in $A/J$. $J$ cannot contain any regular
elements since $\GKdim
  A/J \ge \GKdim A.$ Hence $J$ is contained in a proper ideal of
$Q(A)$ which, being semisimple, satisfies the ACC on annihilator
ideals. A Faith-Utumi type argument \cite{FU} shows that
$\Ann_{Q(A)}J$ is the sum of those components of $Q(A)$ disjoint
from $J$, and they intersect to $\Ann_{ A}J$.

Now we follow the previous proof.
\end{proof}

Unfortunately, the most familiar examples satisfying these
hypotheses fit into the previous known results: Affine semiprime
PI-algebras, and enveloping algebras of finite dimensional Lie
algebras (since they are Noetherian).

\section{Open questions}

The following questions are related to the Hopfian and Bassian properties.
\begin{itemize}
\item What condition on an algebra $A$ satisfying ACC(prime ideals) makes it Hopfian?
(Being affine PI is not enough, in view of Remark~\ref{Hopf rem}.) Is it enough that
$A$ is finitely presented?
\item Does the enveloping algebra of the centerless Virasoro algebra satisfy ACC(prime ideals)\footnote{Now proved
for completely prime ideals by Iyudu, N.K.~ and Sierra
\cite[Proposition~6.4]{IS}}? ACC(semiprime ideals)? ACC(ideals)?
\item Is the enveloping algebra of the centerless Virasoro algebra Bassian?
Hopfian? \footnote{Now proved along with the positive Witt algebra,
and also the Virasoro algebra being Hopfian, by Iyudu, N.K.~ and
Sierra \cite[Proposition~6.5]{IS}}
\end{itemize}

\end{document}